\definecolor{dark-red}{rgb}{0.5,0.15,0.15}
\definecolor{dark-blue}{rgb}{0.15,0.15,0.6}
\definecolor{dark-green}{rgb}{0.15,0.6,0.15}
\numberwithin{equation}{section}%
\newtheorem{Thm}[equation]{Theorem}
\newtheorem*{Thm*}{Theorem}
\newtheorem{Cor}[equation]{Corollary}
\newtheorem*{Que*}{Question}
\theoremstyle{remark}
\newtheorem{Def}[equation]{Definition}
\newtheorem{Exa}[equation]{Example}
\newtheorem{Conv}[equation]{Convention}
\newtheorem{Rem}[equation]{Remark}
\tikzset{
    labelrotatebelow/.style={anchor=north, rotate=90, inner sep=1.0mm}
}
\tikzset{
    labelrotateabove/.style={anchor=south, rotate=90, inner sep=1.0mm}
}
\tikzset{negated/.style={
        decoration={markings,
            mark= at position 0.5 with {
                \node[transform shape] (tempnode) {$\backslash \! \! \backslash$};
            }
        },
        postaction={decorate}
    }
}
\newcommand{\nc}{\newcommand}
\nc{\dmo}{\DeclareMathOperator}
\renewcommand{\emptyset}{\varnothing}
\let\origfootnote\footnote
\newcommand{\punctfootnote}[1]{\kern-.20em\origfootnote{#1}}
\nc{\Beren}[1]{{\color{MidnightBlue}#1}}
\nc{\Drew}[1]{{\color{Orange}#1}}
\nc{\Tobi}[1]{{\color{Green}#1}}
\nc{\Natalia}[1]{{\color{Yellow}#1}}
\nc{\Dout}[1]{\Drew{\sout{#1}}}
\nc{\Bout}[1]{\Beren{\sout{#1}}}
\nc{\Tout}[1]{\Tobi{\sout{#1}}}
\nc{\Nout}[1]{\Natalia{\sout{#1}}}
\nc{\overbar}[1]{\mkern 1.5mu\overline{\mkern-1.5mu#1\mkern-1.5mu}\mkern 1.5mu}
\nc{\weaklyfinite}{weakly closed}
\nc{\finite}{closed}
\nc{\BCdual}[1]{{#1}^*}
\nc{\LCore}{\mathrm{LCore}}
\nc{\Stovicek}{\v{S}\v{t}ov\'{i}\v{c}ek}
\nc{\ftriple}{f_{\natural}}
\nc{\unitC}{\unit_{\cat C}}%
\nc{\unitD}{\unit_{\cat D}}%
\nc{\Pone}{{\mathbb{P}^1}}
\nc{\InvSupp}[1]{\Supp^{-1}(#1)}%
\nc{\InvCosupp}[1]{\Cosupp^{-1}(#1)}
\nc{\closureP}{\overbar{\{\cat P\}}}
\nc{\closureQ}{\overbar{\{\cat Q\}}}
\nc{\singP}{\{\cat P\}}
\nc{\singQ}{\{\cat Q\}}
\nc{\singm}{\{\mathfrak m\}}
\dmo{\Inj}{Inj}
\dmo{\Rep}{Rep}
\dmo{\Res}{Res}
\dmo{\KInjdmo}{K}
\dmo{\Dbdmo}{mod}
\nc{\KInj}[1]{\KInjdmo(\Inj #1)}
\nc{\Dbmod}[1]{\Der^b(\Dbdmo #1)}
\dmo{\Viss}{vis}%
\nc{\Vis}{\Viss}
\nc{\vis}{\Vis}
\nc{\kappaaux}{g}
\nc{\kappaCh}{{\kappaaux(\cat C_h)}}
\nc{\kappam}{{\kappaaux({\mathfrak m})}}
\nc{\kappaP}{{\kappaaux(\cat P)}}
\nc{\kappaQ}{{\kappaaux(\cat Q)}}
\nc{\kappaCP}{{\kappaaux_{\cat C}(\cat P)}}
\nc{\kappaDP}{{\kappaaux_{\cat D}(\cat P)}}
\nc{\kappaCQ}{{\kappaaux_{\cat C}(\cat Q)}}
\nc{\kappaDQ}{{\kappaaux_{\cat D}(\cat Q)}}
\nc{\kappaphiB}{{\kappaaux(\phi(\cat B))}}
\nc{\kappaphiQ}{{\kappaaux(\varphi(\cat Q))}}
\dmo{\Sub}{Sub}
\nc{\SpEn}{\cat S_{E(n)}}
\nc{\SpEnf}{\cat S_n}
\nc{\Lcomp}{L^{\mathrm{com}}} %
\nc{\Ucomp}{U^{\mathrm{com}}}
\nc{\bbullet}{{\scriptscriptstyle\hspace{-1pt}\bullet}}
\nc{\bullett}{{\scriptscriptstyle\bullet}\hspace{-1pt}}
\nc{\LF}{L\hspace{-0.2ex}F}
\dmo{\StMod}{StMod}
\dmo{\Proj}{Proj}
\dmo{\Ind}{Ind}
\nc{\SpG}{\Sp^G}
\nc{\EG}{\bbE_G}
\nc{\DEG}{\Der(\EG)}
\nc{\DE}{\Der(\bbE)}
\nc{\Prst}{{\cat P}\mathrm{r^{st}}}
\nc{\Mack}[2]{\mathrm{Mack}_{#1}(#2)}
\nc{\SC}{S\cat C}
\dmo{\fin}{{fin}}
\dmo{\DM}{DM}
\dmo{\fp}{fp}
\nc{\DMQ}{\DM_Q}
\dmo{\DerKal}{DMack}
\dmo{\Perf}{Perf}
\dmo{\coh}{coh}
\dmo{\Der}{D}
\dmo{\DMot}{DMot}
\dmo{\rmH}{H}
\dmo{\piu}{\underline{\pi}}
\dmo{\Sphere}{\mathbb{S}}
\nc{\HA}{{\rmH \hspace{-0.2em}\bbA}}
\nc{\HZ}{{\rmH \hspace{-0.2em}\bbZ}}
\nc{\HZbar}{{\rmH \hspace{-0.2em}\underline{\bbZ}}}
\nc{\Fp}{{\bbF_{\hspace{-0.1em}p}}}
\nc{\HFp}{{\rmH \hspace{-0.15em}\bbF_{\hspace{-0.1em}p}}}
\nc{\DHZG}{\Der(\HZ_G)}
\nc{\DHZH}{\Der(\HZ_H)}
\nc{\DHZK}{\Der(\HZ_K)}
\nc{\DHZGN}{\Der(\HZ_{G/N})}
\nc{\DHZGG}{\Der(\HZ_{G/G})}
\nc{\DHZCp}{\Der(\HZ_{C_p})}
\nc{\DHZGprime}{\Der(\HZ_{G'})}
\nc{\DHZ}{\Der(\HZ)}
\nc{\mathfrakp}{\mathfrak{p}}
\nc{\mathfrakq}{\mathfrak{q}}
\nc{\mathfrakS}{\mathfrak{S}}
\nc{\mathfrakT}{\mathfrak{T}}
\nc{\Z}{\mathbb{Z}}
\nc{\SSG}{\text{sSet}_*^G}
\nc{\sSet}{\text{sSet}}
\dmo{\csupp}{csupp}
\dmo{\Con}{Conj}
\dmo{\Id}{Id}
\dmo{\rmK}{\textrm{\rm K}}
\dmo{\Spc}{Spc}
\dmo{\thick}{thick}
\dmo{\thickid}{thickid}
\nc{\thicko}[1]{\thickid\langle #1 \rangle}
\nc{\thickt}[1]{\thick_\otimes\langle #1 \rangle}
\dmo{\cone}{cone}
\dmo{\End}{End}
\dmo{\Derperf}{D_{perf}}
\dmo{\Mor}{Mor}
\dmo{\id}{id}
\dmo{\incl}{incl}
\dmo{\Img}{Im}
\dmo{\im}{im}
\dmo{\Ker}{Ker}
\dmo{\ind}{ind}
\dmo{\CoInd}{coind}
\dmo{\res}{res}
\dmo{\infl}{infl}
\dmo{\Derqc}{D_{qc}}
\nc{\DbcohX}{\Der^b(\coh X)}
\dmo{\triv}{triv}
\dmo{\Tel}{Tel} %
\dmo{\grMod}{grMod}%
\dmo{\Mod}{Mod}%
\dmo{\opname}{op}
\dmo{\SH}{SH}%
\dmo{\smallb}{b}%
\dmo{\Spec}{Spec}
\dmo{\supp}{supp}
\dmo{\Supp}{Supp}
\dmo{\cosupp}{cosupp}
\dmo{\Cosupp}{Cosupp}
\nc{\SHc}{{\SH^c}}
\nc{\SHp}{{\SH_{(p)}}}
\nc{\SHcp}{{\SH^c_{(p)}}}
\nc{\SHG}{\SH(G)}
\nc{\SHGp}{\SH(G)_{(p)}}
\nc{\SHGc}{\SHG^c}
\nc{\SHGcp}{\SHG^c_{(p)}}
\nc{\quadtext}[1]{\quad\textrm{#1}\quad}
\nc{\qquadtext}[1]{\qquad\textrm{#1}\qquad}
\nc{\adj}{\dashv}
\nc{\adjto}{\rightleftarrows}
\nc{\bbL}{\mathbb{L}}
\nc{\bbA}{\mathbb{A}}
\nc{\bbE}{\mathbb{E}}
\nc{\bbN}{\mathbb{N}}
\nc{\bbQ}{\mathbb{Q}}
\nc{\bbZ}{\mathbb{Z}}
\nc{\bbF}{\mathbb{F}}
\nc{\bbT}{\mathbb{T}}
\nc{\cat}[1]{\mathscr{#1}}%
\nc{\ie}{{\sl i.e.}, }
\nc{\into}{\mathop{\rightarrowtail}}
\nc{\inv}{^{-1}}
\nc{\isoto}{\mathop{\overset{\sim}\to}}
\nc{\isotoo}{\mathop{\overset{\sim}\too}}
\nc{\onto}{\mathop{\twoheadrightarrow}}
\nc{\too}{\mathop{\longrightarrow}\limits}
\nc{\mapstoo}{\longmapsto}
\nc{\adh}[1]{\overline{#1}}%
\nc{\adhpt}[1]{\adh{\{#1\}}}%
\nc{\aka}{{a.\,k.\,a.}\ }
\nc{\calF}{\mathcal{F}}
\nc{\eg}{{\sl e.\,g.}}
\nc{\hook}{\hookrightarrow}
\nc{\borel}[2]{b_{#1}{#2}}
\nc{\ideal}[1]{\langle #1\rangle}
\dmo{\Hom}{Hom}
\nc{\Homcat}[1]{\Hom_{\cat #1}}
\nc{\iHom}{\mathcal{H}\mathrm{om}}
\nc{\ihom}[1]{\mathsf{hom}(#1)}
\nc{\ihomC}[1]{\mathsf{hom}_{\cat C}(#1)}
\nc{\ihomD}[1]{\mathsf{hom}_{\cat D}(#1)}
\nc{\ihomsub}[2]{\mathsf{hom}_{#1}(#2)}
\nc{\Mid}{\,\big|\,}
\nc{\MMod}{\,\text{-}\Mod}%
\nc{\GrMMod}{\,\text{-}\grMod}%
\nc{\op}{^{\opname}}
\nc{\oto}[1]{\overset{#1}\to}
\nc{\otoo}[1]{\overset{#1}{\,\too\,}}
\nc{\sminus}{\!\smallsetminus\!}
\nc{\poplus}[1]{^{\oplus #1}}%
\nc{\potimes}[1]{^{\otimes #1}}%
\nc{\sbull}{{\scriptscriptstyle\bullet}}%
\nc{\SET}[2]{\big\{\,#1\Mid#2\,\big\}}
\nc{\SpcK}{\Spc(\cat K)}%
\nc{\then}{\Rightarrow}
\nc{\unit}{\mathbb{1}}%
\nc{\xra}{\xrightarrow}
\nc{\phigeom}[1]{\widetilde{\Phi}^{#1}}
\dmo{\Oname}{O}
\dmo{\proper}{proper}%
\dmo{\lenormal}{\unlhd}
\dmo{\lnormal}{\lhd}
\nc{\normal}{\trianglelefteq}%
\nc{\Op}{\Oname^p}%
\nc{\Oq}{\Oname^q}%
\dmo{\Sp}{Sp}
\dmo{\Ho}{Ho}
\dmo{\Fin}{Fin}
\dmo{\add}{add}
\dmo{\Fun}{Fun}
\dmo{\Ext}{Ext}
\dmo{\CAlg}{CAlg}
\dmo{\CMon}{CMon}
\dmo{\CC}{\cat C} %
\dmo{\DD}{\cat D}
\dmo{\OO}{\mathcal{O}}
\dmo{\Map}{Map}
\dmo{\Span}{Span}
\dmo{\N}{N}
\dmo{\Cat}{Cat}
\dmo{\colim}{colim}
\dmo{\hocolim}{hocolim}
\dmo{\Ch}{Ch}
\dmo{\A}{\mathbb{A}^{eff}}
\nc{\AGeff}{\mathbb{A}_G^{\mathrm{eff}}}
\nc{\BGeff}{\mathcal{B}_G^{\mathrm{eff}}}
\nc{\BG}{{\mathcal{B}_G}}
\nc{\NBGeff}{{\N}{\BGeff}}
\dmo{\Ab}{Ab}
\dmo{\Set}{Set}
\dmo{\ev}{ev}
\dmo{\Spcl}{Spcl}
\nc{\Funadd}{\Fun_{\add}}
\dmo{\proj}{proj}
\dmo{\cof}{cof}
\dmo{\Coideal}{Coideal}
\dmo{\gen}{gen}
\nc{\auxcoidealsymb}{\vartriangleleft}
\dmo{\Loc}{Loc}
\dmo{\Coloc}{Coloc}
\dmo{\Locideal}{Locid}
\dmo{\Colocideal}{Colocid}
\nc{\LOCO}{\Locideal}
\nc{\COLOCO}{\Colocideal}
\nc{\Loco}[1]{\LOCO\langle #1 \rangle}
\nc{\Coloco}[1]{\COLOCO\langle #1 \rangle}
\nc{\Thickidset}[1]{\mathcal{THICK}_{\otimes}(#1)}
\nc{\Locidset}[1]{\mathcal{LOC}_{\otimes}(#1)}
\nc{\Colocidset}[1]{\mathcal{C}\mathrm{ocolid}(#1)}
\nc{\LambdaP}{\Lambda^{\cat P}} %
\nc{\LambdaQ}{\Lambda^{\cat Q}} %
\nc{\GammaP}{\Gamma_{\cat P}} %
\nc{\GammaQ}{\Gamma_{\cat Q}} %
\nc{\LambdaW}{\Lambda^{\hspace{-0.3ex}W}} %
\nc{\GammaW}{\Gamma_{\hspace{-0.3ex}W}} %
\nc{\gW}{g_W}
\nc{\gP}{g_{\cat P}}
\nc{\gQ}{g_{\cat Q}}
\nc{\cC}{{\cat C}}
\nc{\cT}{{\cat T}}
\nc{\cD}{{\cat D}}
\nc{\mT}{\kern-0.5em\mod\kern-0.1em\text{-}\cat{T}^c}
\nc{\mTc}{\kern-0.5em\mod\kern-0.1em\text{-}\cat{T}^c}
\nc{\MTc}{\Mod\kern-0.1em\text{-}\cat{T}^c}
\nc{\MT}{\Mod\kern-0.1em\text{-}\cat{T}}
\newcounter{enum-resume-hack}
\Crefname{Thm}{Theorem}{Theorems}
\Crefname{Prop}{Proposition}{Propositions}
\Crefname{thmx}{Theorem}{Theorems}
\begin{document}

\title[On surjectivity in tensor triangular geometry]{\mbox{On surjectivity in tensor triangular geometry}}

\author{Tobias Barthel}\thanks{TB is supported by the European Research Council (ERC) under Horizon Europe (grant No.~101042990). NC is partially supported by Spanish State Research Agency project PID2020-116481GB-I00, the Severo Ochoa and María de Maeztu Program for Centers and Units of Excellence in R$\&$D (CEX2020-001084-M), and the CERCA Programme/Generalitat de Catalunya. DH is supported by grant number TMS2020TMT02 from the Trond Mohn Foundation. BS is supported by NSF grant~DMS-1903429.}
\author{Nat{\`a}lia Castellana}
\author{Drew Heard}
\author{Beren Sanders}

\makeatletter
\patchcmd{\@setaddresses}{\indent}{\noindent}{}{}
\patchcmd{\@setaddresses}{\indent}{\noindent}{}{}
\patchcmd{\@setaddresses}{\indent}{\noindent}{}{}
\patchcmd{\@setaddresses}{\indent}{\noindent}{}{}
\makeatother

\address{Tobias Barthel, Max Planck Institute for Mathematics, Vivatsgasse 7, 53111 Bonn, Germany}
\email{tbarthel@mpim-bonn.mpg.de}
\urladdr{https://sites.google.com/view/tobiasbarthel/home}

\address{Nat{\`a}lia Castellana, Departament de Matem\`atiques, Universitat Aut\`onoma de Barcelona, 08193 Bellaterra, Spain, and Centre de Recerca Matemàtica}
\email{Natalia.Castellana@uab.cat}
\urladdr{http://mat.uab.cat/$\sim$natalia}

\address{Drew Heard, Department of Mathematical Sciences, Norwegian University of Science and Technology, Trondheim}
\email{drew.k.heard@ntnu.no}
\urladdr{https://folk.ntnu.no/drewkh/}

\address{Beren Sanders, Mathematics Department, UC Santa Cruz, 95064 CA, USA}
\email{beren@ucsc.edu}
\urladdr{http://people.ucsc.edu/$\sim$beren/}

\begin{abstract}
	We prove that a jointly conservative family of geometric functors between rigidly-compactly generated tensor triangulated categories induces a surjective map on Balmer spectra. From this we deduce a fiberwise criterion for Balmer's comparison map to be a continuous bijection. This gives short alternative proofs of the Hopkins--Neeman theorem and its generalization, due to Lau, to the case of a finite group acting trivially on an affine scheme.
\end{abstract}

\keywords{tt-geometry, homological spectrum, comparison maps, conservativity.}

\maketitle

\renewcommand{\thepart}{\Roman{part}}
\setcounter{section}{1} %

\subsection*{Introduction}

Tensor triangular geometry studies the global and local structure of tensor triangulated (`$tt$') categories through their Balmer spectra 
akin to the way algebraic geometry views commutative rings geometrically through their Zariski spectra. Understanding the Balmer spectrum is thus a fundamental problem wherever such categories arise including commutative algebra, algebraic and geometric topology, algebraic geometry, non-commutative geometry, Lie theory, and representation theory \cite{BalmerICM}.

A standard way to get a handle on the spectrum of a given tt-category $\cat K$ is to construct a tensor triangulated functor $\cat K \to \cat L$ to a tt-category whose spectrum is understood and then consider the effect of the induced map $\varphi\colon\Spc(\cat L)\to \Spc(\cat K)$.
Necessary and sufficient conditions for the map $\varphi$ to be surjective have been provided by Balmer \cite{Balmer18} and in such favourable circumstances one is able to capture all the points of $\Spc(\cat K)$ from the known points of $\Spc(\cat L)$.

In this paper, we provide an alternative characterization of surjectivity in tensor triangular geometry. In fact, we will characterize when an arbitrary family of functors $\cat K \to \cat L_i$ induces a jointly surjective family of maps $\Spc(\cat L_i)\to\Spc(\cat K)$.
The passage from a single functor to a family of functors provides considerably more flexibility as it greatly expands the collection of ``known'' categories one can utilize. As we will discuss below, this extension to an arbitrary family is not a mere formality and relies on recent advances in tt-geometry.
It enables us to derive a fiberwise criterion for Balmer's comparison map to the Zariski spectrum to be a bijection.

To demonstrate the utility of our methods, %
we give short and arguably simplified proofs of the Hopkins--Neeman theorem as well as Lau's recent computation of the Balmer spectrum of certain Deligne--Mumford stacks. Further applications can be found in \cite{BBB2024pp,Litterell2024,Hyslop2024}.

\subsection*{Results and proofs}

Throughout this note, we work in the context of rigidly-compactly generated tensor triangulated categories, usually denoted by $\cat S$ or $\cat T$. We write $\Spc(\cat T^c)$ for the associated Balmer spectrum of compact (=dualizable) objects and freely use basic constructions from tt-geometry \cite{Balmer05a,Balmer10b}. A coproduct-preserving tensor triangulated functor $f^*\colon\cat T\to \cat S$ is called a \emph{geometric functor}. Such a functor preserves compact objects and thus induces a continuous map $\varphi\colon \Spc(\cat S^c)\to \Spc(\cat T^c)$. Following terminology introduced in \cite{Balmer20_bigsupport,CarmeliSchlankYanovski2022}, a \emph{weak ring} in $\cat T$ is an object~$R \in \cat T$ equipped with a map $\eta \colon \unit \to R$ from the unit object such that the induced map $R\otimes \eta\colon R \to R\otimes R$ is a split monomorphism.

\begin{Def}
Suppose $\{f_i^*\colon \cat T \to \cat S_i\}_{i\in I}$ is a family of geometric functors between rigidly-compactly generated tt-categories. We say the family is
    \begin{itemize}
        \item \emph{jointly conservative} if for any $t \in \cat T$, $f_i^*(t) = 0$ for all $i \in I$  implies $t=0$;
        \item \emph{jointly nil-conservative} if for any weak ring $R \in \cat T$, $f_i^*(R) = 0$ for all $i \in I$ implies $R=0$.
    \end{itemize}
	Note that any jointly conservative family is in particular jointly nil-conservative. The converse does not hold:
\end{Def}
\begin{Exa}
	The Morava $K$-theories $\{K(n)\otimes-\colon \Sp \to \Mod(K(n))\}_{n \in \bbN\cup \{\infty\}}$ are jointly nil-conservative as a consequence of the nilpotence theorem \cite[Theorem~3]{HopkinsSmith98} but they are not jointly conservative since they all annihilate the Brown--Comenetz dual of the sphere \cite[Corollary B.12]{HoveyStrickland99}.
\end{Exa}
\begin{Conv}\label{conv:coproduct}
    Throughout this paper, coproducts are taken in the category of topological spaces, as opposed to the category of spectral spaces. Note that the underlying set of the latter is typically much larger than that of the former: for example, the $\bbN$-indexed coproduct of singletons may be identified with $\beta\bbN$, the Stone--\v{C}ech compactification of $\bbN$; see \cite[Example 10.1.5]{DickmannSchwartzTressl19}.
\end{Conv}
\begin{Thm}\label{thm:main}
	If $\{f_i^*\colon \cat T \to \cat S_i\}_{i\in I}$ is a jointly nil-conservative family of geometric functors, then the induced map
	\begin{equation}\label{eq:varphi}
		\varphi\colon \bigsqcup_{i\in I} \Spc(\cat S_i^c) \to \Spc(\cat T^c)
	\end{equation}
	is surjective.%
\end{Thm}

\begin{Rem}
	If the family is finite, then this result can be deduced from the criterion \cite[Theorem 1.3]{Balmer18} by first proving that the geometric functor $\prod_if_i^*\colon \cat T \to \prod_{i\in I} \cat S_i$ detects tensor nilpotence of morphisms with dualizable source, as in \cite[Section 2.3]{BCHNP1}. For an infinite family, such an argument cannot work directly. Firstly, as noted in \cite[Remark 2.7]{gomez}, an infinite product of rigidly compactly-generated categories is almost never rigidly compactly-generated with respect to the coordinatewise tensor-triangulated structure. Even if one works with the spectrum of dualizable objects, where one has
	$(\prod_{i \in I}\cat S_i)^d = \prod_{i \in I}\cat S_i^d$, we see that $\Spc(\prod_{i \in I}\cat S_i^d) \neq \bigsqcup_{i \in I}\Spc(\cat S_i^d)$ whenever infinitely many of the~$\cat S_i$ are nonzero. 
 Indeed, the spectrum $\Spc(\prod_{i \in I}\cat S_i^d)$ is a spectral space and in particular quasi-compact, while an infinite coproduct of non-empty spaces cannot be quasi-compact. 
 The same problem arises if we use underlying models and take the product in the $\infty$-category of compactly generated $\infty$-categories, which gives the ind-completion of $\prod_{i \in I}\cat S_i^c$. See \cite[Example~2.6]{gomez}.
\end{Rem}

\begin{Exa}
	If $\{k_i\}_{i \in I}$ is a (not necessarily finite) family of fields, then the Zariski spectrum $\Spec(\prod_{i \in I}k_i)$ is homeomorphic to the Stone--\v{C}ech compactification of the discrete indexing set $I$.
\end{Exa}

\begin{Rem}
	Using the Balmer--Favi support \cite{BalmerFavi11} and the techniques of \cite{BCHS1}, it is possible to prove \cref{thm:main} for arbitrary indexing sets $I$ under the additional hypothesis that $\Spc(\cat T^c)$ is weakly noetherian. However, since the construction of a surjective map as in \eqref{eq:varphi} is often the first step in understanding $\Spc(\cat T^c)$, making any assumption on its topology is not desirable. Consequently, our proof relies on a suitable support theory for big objects which exists unconditionally without any point-set assumptions on $\Spc(\cat T^c)$. Such a theory is provided by the homological residue fields developed in \cite{BalmerKrauseStevenson19,Balmer20_nilpotence, Balmer20_bigsupport}, from which we will draw freely. Indeed, we will derive \cref{thm:main} as a corollary of the following more complete statement:
\end{Rem}

\begin{Thm}\label{thm:main-h}
	A family $\{f_i^*\colon\cat T \to \cat S_i\}_{i \in I}$ of geometric functors is jointly nil-conservative if and only if the induced map on homological spectra
	\begin{equation}\label{eq:homologicalsurjectivity}
		\varphi^h\colon\bigsqcup_{i \in I} \Spc^h(\cat S_i^c) \to \Spc^h(\cat T^c)
	\end{equation}
	is surjective.
\end{Thm}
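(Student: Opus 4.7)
The argument rests on Balmer's theory of homological residue fields \cite{BalmerKrauseStevenson19, Balmer20_nilpotence, Balmer20_bigsupport}. Points of $\Spc^h(\cat T^c)$ are maximal Serre $\otimes$-ideals $\cat B$ of the coherent module category $\mathrm{mod}\text{-}\cat T^c$, each yielding a homological residue field $\bar h^{\cat B}\colon \cat T \to \overline{\cat A}_{\cat B}$. The central input is the detection theorem: a weak ring $R \in \cat T$ vanishes if and only if $\bar h^{\cat B}(R) = 0$ for every $\cat B \in \Spc^h(\cat T^c)$. Second, since $\varphi^h$ is defined by pulling back maximal Serre $\otimes$-ideals along the functor $\mathrm{mod}\text{-}\cat T^c \to \mathrm{mod}\text{-}\cat S^c$ induced by a geometric functor $f^*$, one has the basic compatibility
\[
\bar h^{\varphi^h(\cat C)}(t) = 0 \quad \Longleftrightarrow \quad \bar h^{\cat C}(f^*(t)) = 0 \qquad (t \in \cat T).
\]

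The direction $(\Leftarrow)$ is then formal. Assume $\varphi^h$ is surjective and let $R$ be a weak ring with $f_i^*(R)=0$ for all $i$. For any $\cat B \in \Spc^h(\cat T^c)$, surjectivity provides $i \in I$ and $\cat C \in \Spc^h(\cat S_i^c)$ with $\varphi^h(\cat C)=\cat B$, and the compatibility forces $\bar h^{\cat B}(R)=0$; detection then gives $R=0$.

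For the harder direction $(\Rightarrow)$, I would assume joint nil-conservativity and suppose, for contradiction, that some $\cat B \in \Spc^h(\cat T^c)$ lies outside $\im(\varphi^h)$. The plan is to attach to $\cat B$ a non-zero weak ring $k(\cat B) \in \cat T$ whose homological support $\{\cat B' : \bar h^{\cat B'}(k(\cat B)) \neq 0\}$ is exactly $\{\cat B\}$: a ``residue field object'' at the point $\cat B$, obtainable from Balmer's machinery as a weak-ring incarnation of the pure-injective envelope of $\bar h(\unit)$ in $\overline{\cat A}_{\cat B}$ lifted back to $\cat T$ by Brown representability. Granted such $k(\cat B)$: for each $i$, every $\cat C \in \Spc^h(\cat S_i^c)$ satisfies $\varphi^h(\cat C)\neq \cat B$ by hypothesis, hence $\bar h^{\varphi^h(\cat C)}(k(\cat B))=0$, and the compatibility translates this into $\bar h^{\cat C}(f_i^*(k(\cat B)))=0$ for all $\cat C$. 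Since $f_i^*$ preserves weak rings, the detection theorem applied in $\cat S_i$ gives $f_i^*(k(\cat B))=0$ for every $i$. Joint nil-conservativity then forces $k(\cat B)=0$, contradicting $\bar h^{\cat B}(k(\cat B))\neq 0$.

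The hard part is the existence of $k(\cat B)$ with the prescribed single-point homological support, uniformly over $\Spc^h(\cat T^c)$ and without any point-set hypothesis on the spectrum. Securing this requires genuine input from the theory of pure-injectives in rigidly-compactly generated tt-categories: one must show that each homological point is witnessed by a weak ring supported only at that point, and that the weak-ring structure survives the lift from $\overline{\cat A}_{\cat B}$ back to $\cat T$. Everything else in the argument is a formal manipulation of the detection theorem and the naturality of the homological residue field construction under geometric functors.
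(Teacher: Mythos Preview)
Your $(\Leftarrow)$ direction is fine and matches the paper's argument in content, just phrased through residue fields rather than support.

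Your $(\Rightarrow)$ direction has a genuine gap. You need a weak ring $k(\cat B)$ whose homological support is \emph{exactly} $\{\cat B\}$, and you correctly point to $E_{\cat B}$ as the candidate. But the single-point support property $\Supp^h(E_{\cat B})=\{\cat B\}$ --- equivalently, $E_{\cat B}\otimes E_{\cat B'}=0$ for $\cat B\neq\cat B'$ --- is not established in the references you invoke and, to my knowledge, is not known in general without additional hypotheses. You flag this as ``the hard part'' but do not supply an argument, and the proposal as written does not close this gap.

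The paper avoids this issue entirely by a different maneuver. It does not argue by contradiction and does not need to isolate $\cat B$ by a weak ring supported only there. Instead, given $\cat B$, it applies nil-conservativity directly to the weak ring $E_{\cat B}$ to obtain some $i$ with $f_i^*(E_{\cat B})\neq 0$; the projection formula then yields $f_{i*}(\unit)\otimes E_{\cat B}\neq 0$. The only property of $E_{\cat B}$ needed now is the much weaker (and established) fact that for any weak ring $W$ one has $W\otimes E_{\cat B}\neq 0 \Rightarrow \cat B\in\Supp^h(W)$ (this is Balmer's identification of $\Supp^h$ with the naive support for weak rings). Applying this with $W=f_{i*}(\unit)$ and invoking Balmer's identity $\Supp^h(f_{i*}(\unit))=\im(\Spc^h(f_i^*))$ finishes the proof. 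The moral: instead of manufacturing a weak ring supported only at $\cat B$, use the weak ring $f_{i*}(\unit)$, whose support is already known to equal the image you care about, and detect membership of $\cat B$ in it via $E_{\cat B}$.
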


\begin{proof}
	$(\Rightarrow)$: Let $\cat B \in \Spc^h(\cat T^c)$ be a homological prime and consider the associated weak ring $E_{\cat B} \neq 0$; see \cite[Section 3]{BalmerKrauseStevenson19}. By assumption, there exists some $i \in I$ such that $f_i^*(E_{\cat B}) \neq 0$. For simplicity, write $f^* \coloneqq f_i^*$ and $f_*$ for its right adjoint. By the unit-counit identity a right adjoint is conservative on the essential image of its left adjoint, and so $f_*f^*(E_{\cat B}) \ne 0$.  Along with the projection formula \cite[(2.16)]{BalmerDellAmbrogioSanders16}, we deduce 
	\[
		f_*(\unit) \otimes E_{\cat B} \simeq f_*f^*(E_{\cat B}) \neq 0.
	\]
	Note that as a right adjoint to a tt-functor, $f_*$ is lax symmetric monoidal, hence~$f_*(\unit)$ is a weak ring in $\cat T$. Since the homological support coincides with the naive homological support for weak rings \cite[Theorem 4.7]{Balmer20_bigsupport}, this implies that $\cat B \in \Supp^h(f_*(\unit))$. By \cite[Theorem 5.12]{Balmer20_bigsupport}, we conclude that
	\[
		\cat B \in \Supp^h(f_*(\unit)) = \im(\Spc^h(f^*)),
	\]
	thereby verifying  that \eqref{eq:homologicalsurjectivity} is surjective.

	$(\Leftarrow)$: If $R \in \cat T$ is a nonzero weak ring, then $\Supp^h(R) \neq \emptyset$ by \cite[Thm.~1.8]{Balmer20_bigsupport}. Hence, if \eqref{eq:homologicalsurjectivity} is surjective then there exists an $i \in I$ such that 
	\[
		\im(\Spc^h(f_i^*)) \cap \Supp^h(R) \neq \emptyset.
	\]
	By \cite[Theorem 1.2(d) and Theorem 1.9]{Balmer20_bigsupport}, this implies $\Supp^h(f_{i*}f_i^*(R))= \Supp^h(f_{i*}(\unit) \otimes R) = \Supp^h(f_{i*}(\unit))\cap \Supp^h(R) \neq \emptyset$ so that $f_i^*(R) \neq 0$.
\end{proof}

\begin{proof}[Proof of \cref{thm:main}]
	In order to deduce \cref{thm:main} from \cref{thm:main-h}, we employ the naturality of the homological comparison map $\phi$ from \cite[Theorem 5.10]{Balmer20_bigsupport}, resulting in a commutative square:
	\[
		\xymatrix{\bigsqcup_{i \in I} \Spc^h(\cat S_i^c) \ar[r]^-{\varphi^h} \ar[d]_{\sqcup\phi_{\cat S_i}} & \Spc^h(\cat T^c) \ar[d]^-{\phi_{\cat T}}\\
		\bigsqcup_{i \in I} \Spc(\cat S_i^c) \ar[r]_-{\varphi} & \Spc(\cat T^c).}
	\]
	By \cite[Corollary 3.9]{Balmer20_nilpotence}, the vertical maps are surjective, and so is the top horizontal map by \cref{thm:main-h}. It follows that $\varphi$ is also surjective.
\end{proof}

\begin{Rem}
	It is an open question whether the converse to \cref{thm:main} holds, that is, whether the surjectivity of $\varphi$ in \eqref{eq:varphi} implies that the family $\{f_i^*\}_{i \in I}$ is jointly nil-conservative. It is known that the family need not be jointly conservative (see \cite[Example 14.26]{BCHS1}). In light of \cref{thm:main-h}, the converse of \cref{thm:main} would follow from Balmer's ``Nerves of Steel'' Conjecture that the homological and tensor triangular spectra always coincide; see \cite{bhs2}.
\end{Rem}

\begin{Rem}
	While \cref{thm:main} is in general not enough to determine the topology on $\Spc(\cat T^c)$ even when $\varphi$ is a bijection (see for instance \cite[Remark 15.12]{bhs1}), there are cases in which it can be used to compute the topology. Recall that Balmer \cite{Balmer10b} constructs a natural \emph{comparison map}
	\[
		\rho_{\cat T}\colon \Spc(\cat T^c) \to \Spec^h(\End_{\cat T}^*(\unit))
	\]
	between the tensor triangular spectrum and the Zariski spectrum of the graded endomorphism ring of the unit object. If $\cat T$ is \emph{noetherian} in the sense that $\End_{\cat T}^*(C)$ is noetherian as an $\End_{\cat T}^*(\unit)$-module for each $C \in \cat T^c$, then $\rho_{\cat T}$ is a homeomorphism if and only if it is a bijection \cite[Corollary 2.8]{Lau2021Balmer}. The following result provides a `fiberwise' criterion for Balmer's comparison map to be a continuous bijection:
\end{Rem}

\begin{Cor}\label{cor:fiberwise}
	Let $\cat T$ be a rigidly-compactly generated tt-category and consider a family of geometric tt-functors $\{f_i^*\colon \cat T \to \cat S_i\}_{i \in I}$ satisfying the following properties:     
	\begin{enumerate}
		\item the family $\{f_i^*\}_{i \in I}$ is jointly nil-conservative;
        \item $\rho_{\cat S_i}$ is a bijection for all $i \in I$;
        \item the induced map on Zariski spectra 
			\[
				\bigsqcup_{i\in I}\Spec^h(\End_{\cat S_i}^*(\unit)) \to \Spec^h(\End_{\cat T}^*(\unit))
			\]
        is a bijection.
    \end{enumerate}
	Then $\rho_{\cat T}$ is a bijection. If in addition $\cat T$ is noetherian, then $\rho_{\cat T}$ is a homeomorphism. 
\end{Cor}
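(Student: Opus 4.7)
The plan is to verify that $\rho_{\cat T}$ is a bijection; since $\cat T$ is noetherian, the cited result \cite[Corollary 2.8]{Lau2021Balmer} will then upgrade this to a homeomorphism. To do so, I would exploit the naturality of Balmer's comparison map $\rho$ with respect to the geometric functors $f_i^*$, assembling the following commutative square:
\[
	\xymatrix{\bigsqcup_{i\in I}\Spc(\cat S_i^c) \ar[r]^-{\varphi} \ar[d]_{\sqcup \rho_{\cat S_i}} & \Spc(\cat T^c) \ar[d]^-{\rho_{\cat T}} \\ \bigsqcup_{i \in I}\Spec^h(\End_{\cat S_i}^*(\unit)) \ar[r]_-{\psi} & \Spec^h(\End_{\cat T}^*(\unit)).}
\]

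Now I would read off what each arrow gives me. By hypothesis (a) combined with \cref{thm:main}, the top map $\varphi$ is surjective. By hypothesis (b), the left vertical map $\sqcup \rho_{\cat S_i}$ is a bijection, and by hypothesis (c), the bottom horizontal map $\psi$ is a bijection. Consequently, the composite along the bottom-left corner, which equals $\rho_{\cat T} \circ \varphi$ by commutativity, is a bijection.

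From here everything is formal. Since $\rho_{\cat T}\circ\varphi$ is injective, $\varphi$ must be injective; combined with its surjectivity, $\varphi$ is a bijection. Then $\rho_{\cat T} = (\rho_{\cat T}\circ\varphi)\circ\varphi^{-1}$ is a composition of bijections, hence a bijection. Applying \cite[Corollary 2.8]{Lau2021Balmer} under the noetherian hypothesis on $\cat T$ concludes that $\rho_{\cat T}$ is a homeomorphism.

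I do not anticipate any genuine obstacle: the main input is \cref{thm:main}, and the rest is a diagram chase. The only point worth double-checking is the commutativity of the square, which is immediate from the naturality of $\rho$ applied to each $f_i^*$ followed by taking the coproduct over $i \in I$ in topological spaces.
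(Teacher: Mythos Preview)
Your proposal is correct and matches the paper's own proof essentially line for line: both set up the same naturality square, use hypotheses (b) and (c) to make the bottom-left composite a bijection (forcing $\varphi$ to be injective), invoke \cref{thm:main} for surjectivity of $\varphi$, and then conclude that $\rho_{\cat T}$ is a bijection and hence a homeomorphism by noetherianity.
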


\begin{proof}
	Naturality of the comparison map yields a commutative diagram
	\[
	\begin{tikzcd}
		\bigsqcup_{i \in I} \Spc(\cat S_i^c) \ar[r,"\varphi"] \ar[d, "\sqcup \rho_{\cat S_i}"'] & \Spc(\cat T^c) \ar[d, "\rho_{\cat T}"] \\
		\bigsqcup_{i \in I} \Spec^h(\End_{\cat S_i}^*(\unit)) \ar[r] & \Spec^h(\End_{\cat T}^*(\unit)).
	\end{tikzcd}
	\]
	On the one hand, by assumption, both the left vertical and the bottom horizontal maps are bijections, so $\varphi$ has to be injective. On the other hand, \cref{thm:main} implies that $\varphi$ is also surjective, hence bijective. It follows that $\rho_{\cat T}$ is a bijection and thus a homeomorphism whenever $\cat T$ is noetherian.
\end{proof}

\begin{Rem}
	\Cref{cor:fiberwise} offers an alternative perspective on the Hopkins--Neeman theorem \cite{Hopkins87, Neeman92a} for commutative rings:
\end{Rem}

\begin{Exa}\label{exa:hopkinsneeman}
	Let $\Der(R)$ be the derived category of a noetherian commutative ring~$R$. For any prime ideal $\frak p \in \Spec(R)$, consider the residue field $\kappa(\frak p)$, constructed as the quotient field of $R/\frak p$, and write $f_{\frak p}^*\colon \Der(R) \to \Der(\kappa(\frak p))$ for the associated base-change functor. We claim that the family $\{f_{\frak p}^*\}_{\frak p \in \Spec(R)}$ satisfies the assumptions of \cref{cor:fiberwise}. Indeed, $(b)$ and $(c)$ are immediate: since $\kappa(\frak p)$ is a field, $\rho_{\Der(\kappa(\frak p))}$ is a bijection (between singletons), while $(c)$ holds by construction. Finally, the family $\{f_{\frak p}^*\}$ is jointly conservative by \cite[Lemma 2.12]{Neeman92a},\punctfootnote{Note that the proof of this lemma does not rely on the nilpotence theorem or the thick subcategory theorem for $\Der(R)$.} which verifies $(a)$. Therefore, the comparison map
	\[
		\rho_{\Der(R)}\colon \Spc(\Der(R)^c) \xrightarrow{\sim} \Spec(R)
	\]
    is a homeomorphism. 
    
    The extension to arbitrary commutative rings follows by absolute noetherian approximation as in Thomason's work \cite{Thomason97}; cf.~\cite[Lemma 2.12]{Lau2021Balmer}. \qed
\end{Exa}

\begin{Exa}\label{exa:lau}
	Let $G$ be a finite group and let $R$ be a noetherian commutative ring equipped with trivial $G$-action. We write $\Rep(G,R)$ for the tt-category of $R$-linear derived representations of $G$ introduced in \cite{barthel2021rep1}. This category is noetherian and rigidly-compactly generated with subcategory of compact objects given by $\Der^b(\mathrm{mod}(G,R))$, the bounded derived category of $R[G]$-modules whose underlying complex of $R$-modules is perfect. If $k$ is a field, then $\Rep(G,k)$ coincides with the homotopy category of unbounded chain complexes of injective $k[G]$-modules studied in \cite{BensonKrause2008Complexes}; for an extension of this homological model to coefficients in $R$, see \cite{BBIKP_stratification}.

	For any prime ideal $\frak p \in \Spec(R)$, there is a geometric fiber-functor 
	\[
		F_{\frak p}^*\colon \Rep(G,R) \to \Rep(G,\kappa(\frak p)),
	\]
	which is induced by base-change along the canonical map $R \to \kappa(\frak p)$. We claim that the family $\{F_{\frak p}^*\}_{\frak p \in \Spec(R)}$ satisfies the conditions of \cref{cor:fiberwise}. Indeed, the joint conservativity of the family is the content of \cite[Proposition 3.6]{BBIKP_stratification}, while $\rho_{\Der^b(k[G])}$ is a homeomorphism by \cite{BensonCarlsonRickard97} for any field $k$. It remains to verify condition $(c)$. To this end, note that the map on Zariski spectra induced by $F_{\frak p}^*$ identifies with the composite
	\[
		\Spec^h(H^*(G,\kappa(\frak p))) \xrightarrow{\sim} \Spec^h(H^*(G,R)\otimes_R \kappa(\frak p)) \to \Spec^h(H^*(G,R)). 
	\]
	The first map is a homeomorphism by \cite[Corollary 8.29]{Lau2021Balmer}; see also \cite[Corollary 5.6]{BIKP2022fibrewise}. Varying the second map over $\Spec(R)$ assembles into a bijection
	\[
		\bigsqcup_{\frak p \in \Spec(R)}\Spec^h(H^*(G,R)\otimes_R \kappa(\frak p)) \xrightarrow{\sim} \Spec^h(H^*(G,R)),
	\]
	which verifies $(c)$ of \cref{cor:fiberwise} for $\{F_{\frak p}^*\}_{\frak p \in \Spec(R)}$. We conclude that $\rho_{\Der^b(\mathrm{mod}(G,R))}$ is a homeomorphism.\qed
\end{Exa}

\begin{Rem}
	\Cref{exa:lau} recovers the main theorem of \cite{Lau2021Balmer} for rings equipped with trivial $G$-action --- modulo the straightforward reduction  from the general case to the case where $R$ is noetherian, as explained at the beginning of the proof of \cite[Theorem 11.1]{Lau2021Balmer}. We remark that the key input to our proof is the joint conservativity of the functors $\{F_{\frak p}^*\}$ on the `big' categories $\Rep(G,R)$ and emphasize that the proof of this does not rely on the stratification of $\Rep(G,k)$.
\end{Rem}

\begin{Rem}
    The previous example extends to any finite flat group scheme over a noetherian commutative ring; cf.~\cite{BBIKP_stratification}. The key input is the recent generalization of the Friedlander--Suslin theorem \cite{FriedlanderSuslin1997} due to van der Kallen \cite{vanderKallen2023}. 
\end{Rem}

\subsection*{Acknowledgements}

We readily thank Paul Balmer for helpful conversations, Julia Pevtsova for raising the question of whether `fiberwise phenomena' admit a general tt-geometric explanation, and Juan Omar G\'{o}mez for pointing out some results from \cite{gomez}. We also thank the anonymous referees for helpful comments on an earlier version of this paper. We would also like to thank the Max Planck Institute and the Hausdorff Research Institute for Mathematics for their hospitality in the context of the Trimester program \emph{Spectral Methods in Algebra, Geometry, and Topology} funded by the Deutsche Forschungsgemeinschaft (DFG, German Research Foundation) under Germany’s Excellence Strategy – EXC-2047/1 – 390685813.

\bibliographystyle{alphasort}\bibliography{bibliography}
\end{document}